\journal{Statistics and Probability Letters}
\numberwithin{equation}{section}
\newcommand{\dto}{%
	\mathrel{\vbox{\offinterlineskip\ialign{%
				\hfil##\hfil\cr
				$\scriptstyle d$\cr
				$\longrightarrow$\cr
			}}}}
\newcommand{\vto}{%
	\mathrel{\vbox{\offinterlineskip\ialign{%
				\hfil##\hfil\cr
				$\scriptstyle v$\cr
				$\longrightarrow$\cr
			}}}}
\newcommand{\toi}{\to\infty}
\newcommand{\pr}{\mathbb{P}} 
\newcommand{\ex}{\mathbb{E}}
\newcommand{\Rset}{\mathbb{R}} 
\newcommand{\Nset}{\mathbb{N}} 
\newcommand{\Zset}{\mathbb{Z}}
\newcommand{\N}{\mathbb{N}}
\newcommand{\spaceX}{\mathbb{X}} 
\newcommand{\x}{\spaceX}
\newcommand{\borel}{\mathcal{B}}
\newcommand{\bborel}{\mathcal{B}_b}
\newcommand{\bounded}{\bborel}
\newcommand{\subbounded}{\mathcal{C}_b}
\newcommand{\mx}{\mathcal{M}(\spaceX)}
\newcommand{\mpx}{\mathcal{M}_p(\mathbb{X})}
\newcommand{\cbb}{CB_b(\spaceX)}
\newcommand{\cbbp}{CB_b^+(\spaceX)}
\newcommand{\sub}{\subseteq}
\newcommand{\bx}{\boldsymbol{x}}
\newcommand{\bzero}{\boldsymbol{0}}
\newcommand{\loo}{l_{0,0}}
\newcommand{\cl}[1]{\overline{#1}}
\newcommand{\1}[1]{\mathbbm{1}_{#1}}
\newtheorem{theorem}{Theorem}[section]
\newtheorem{proposition}[theorem]{Proposition}
\theoremstyle{remark}
\newtheorem{remark}[theorem]{Remark}
\newtheorem{example}[theorem]{Example}
\begin{document}

\begin{frontmatter}



\title{A note on vague convergence of measures
}


\author[a]{Bojan Basrak}
\ead{bbasrak@math.hr}
\address[a]{Department of Mathematics, Faculty of Science, University of Zagreb, Bijeni\v cka 30, Zagreb, Croatia}

\author[a]{Hrvoje Planini\'c\corref{cor1}}
\ead{planinic@math.hr}
\cortext[cor1]{Corresponding author}

\begin{abstract}
	We propose a new approach to vague convergence of measures 
based on the general theory of boundedness due to \cite{hu:1966}.
The article explains how this connects and unifies
several frequently used types of vague convergence from the literature.
Such an approach allows one to translate already developed results from one type of vague convergence to another.
We further analyze the corresponding notion of vague topology and 
give a new and useful characterization of 
 convergence in distribution of random measures in this topology.

\end{abstract}

\begin{keyword}
boundedly finite measures\sep vague convergence\sep $w^\#$--convergence\sep  vague topology\sep random measures\sep convergence in distribution\sep convergence determining families \sep Lipschitz continuous functions


\MSC[2010] Primary 28A33 \sep secondary 60G57 \sep 60G70
\end{keyword}

\end{frontmatter}


\section{Introduction}
Let $\x$ be a Polish space, i.e.\ separable topological space which is metrizable by a complete metric. Denote by $\borel(\x)$ the corresponding Borel $\sigma$--field and choose a subfamily $\bounded(\x)\subseteq \borel(\x)$ of sets, called \textit{bounded} (Borel) sets of $\x$. When there is no fear of confusion, we will simply write $\borel$ and $\bounded$.



A Borel measure $\mu$ on $\x$ is said to be \textit{locally (or boundedly) finite} if $\mu(B)<\infty$ for all $B\in \bborel$. Denote by $\mx=\mathcal{M}(\x,\bborel)$ the space of all such measures.  For measures $\mu,\mu_1,\mu_2,\ldots \in \mx$, we say that $\mu_n$ converge \textit{vaguely} to $\mu$  and denote this by $\mu_n\vto \mu$, if as $n\toi$,
\begin{equation}\label{eq:vague_convergence}
\mu_n(f)=\int f d\mu_n \to \int f d\mu= \mu(f) \, ,
\end{equation}  
for all bounded and continuous real--valued functions $f$ on $\x$ with support being a bounded set. Denote by $\cbb$ the family of all such functions and by $\cbbp$ the subset of all nonnegative functions in $\cbb$.

In \cite[Section 4]{kallenberg:2017},  theory of vague convergence is developed under the assumption that $\bounded$ is the family all metrically bounded Borel sets w.r.t.\ a metric generating the topology of $\x$. The same notion of convergence was studied before under the name of $w^{\#}$-convergence in~\cite[Section
A2.6]{daley:verejones:2008t1}.
However, some other related notions appeared in the literature, especially in the context of extreme value theory, see~\cite{lindskog:resnick:roy:2014} 
and references therein.



In Section \ref{sec:vague} we propose a new, and in our opinion, more intuitive approach to the concept of vague convergence which 
also clarifies the connection between {all those other notions} of convergence found in the literature. It  is based on the abstract theory of boundedness due to~\cite{hu:1966} which allows one to characterize all metrizable families of bounded sets. In this way, the emphasis is put on the choice of the family of bounded sets rather than on the construction of the appropriate metric. 


In Section \ref{sec:vague_topology}, we briefly discuss the question of metrizability of the so--called vague topology on $\mx$. This topology is usually defined using projection maps such that convergence of sequences of measures derived in this way, exactly corresponds to vague convergence defined in (\ref{eq:vague_convergence}). It turns out that this topology is metrizable (and moreover Polish), as it has been observed in the literature, but we were not able to find a complete argument, so we provide one in Proposition \ref{prop:metrization}.

Finally, in Section \ref{sec:conv_in_distr}, we  
deduce sufficient conditions for convergence in distribution of random measures on $\spaceX$ with respect to the vague topology. In particular, for random measures $N, N_1, N_2, \dots$ on $\spaceX$, we show that convergence of so--called Laplace functionals 
$\ex[e^{-N_n(f)}]\to \ex[e^{-N(f)}] $ for all $f$ in a certain class of Lipschitz continuous functions (see Proposition \ref{cor:Lipschitz} for details), implies that random measures $N_n$ converge in distribution to $N$. Recently, \cite{lohr:rippl2016} provided conditions under which a subalgebra of $\cbb$ is convergence determining 
for vague convergence of (deterministic) measures in $\mx$. An application of Proposition \ref{cor:Lipschitz} allows us to extend their results to the random case, see Proposition \ref{prop:Lohr_Rippl}.

\section{The abstract concept of bounded sets}
\label{sec:vague}
Following \cite[Section V.5]{hu:1966}, we say that a family of sets $\bounded\subseteq \borel(\x)$ is a (Borel) \textit{boundedness} in $\x$ if (i)  $A\sub B \in \bounded$ for $A\in\borel$ implies $A\in \bounded$; (ii) $A,B\in \bounded$ implies $A\cup B\in \bounded$.   
A subfamily $\subbounded$ of $\bounded$ is called a \textit{basis} of $\bounded$ if every $B\in \bounded$ is contained in some $C\in \subbounded$.
Finally, boundedness $\bounded$ is said to be \textit{proper} if it is adapted to the topology of $\x$ in the sense that  for each $B \in \bounded$ there exists an open set $U\in \bounded$  such that $\cl{B} \subseteq U$, where  $\cl{B}$ denotes the closure of $B$ in $\x$. 

To be consistent with the existing terminology of \cite[p.\ 19]{kallenberg:2017}, we say that a boundedness \textit{properly localizes} $\x$ if it is proper and has a countable basis which covers $\x$.
If $d$ metrizes $\x$ then the family of all sets in $\borel$ with finite $d$--diameter is an example of a boundedness which properly localizes $\x$. By \cite[Corollary 5.12]{hu:1966}, this turns out to be the only example.
\begin{theorem}[{\cite[Corollary 5.12]{hu:1966}}]\label{thm:Hu}
Boundedness $\bounded$ properly localizes $\x$ if and only if 
there exists a metric on $\x$ which generates the topology of $\x$ and under which the metrically bounded Borel subsets of $\x$ coincide with $\bounded$.
\end{theorem}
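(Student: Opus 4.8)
The plan is to prove the two implications separately, the forward (``only if'') direction being the substantial one.

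First I would dispatch the easy direction. Suppose $d$ is a metric generating the topology of $\x$ and let $\bounded$ be the family of $d$-bounded Borel sets. That $\bounded$ is a boundedness is immediate, since subsets and finite unions of sets of finite diameter again have finite diameter. For properness, given $B\in\bounded$ I note that $\cl{B}$ has the same diameter as $B$ and that the open $1$-neighbourhood $U=\{x:d(x,\cl{B})<1\}$ is open, contains $\cl{B}$, and has diameter at most $\mathrm{diam}(B)+2<\infty$, so $U\in\bounded$. Fixing $x_0\in\x$, the closed balls $C_n=\{x:d(x,x_0)\le n\}$ form a countable basis covering $\x$, since any $B\in\bounded$ satisfies $B\sub C_n$ once $n\ge\sup_{x\in B}d(x,x_0)<\infty$. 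Hence $\bounded$ properly localizes $\x$.

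For the converse, the plan is to manufacture from the abstract data a continuous ``exhaustion function'' and then read off the metric from it. Since $\bounded$ has a countable basis $\subbounded=\{C_n\}$ covering $\x$, and $\x$ is metrizable, I would first build an increasing sequence of open bounded sets $G_1\sub\cl{G_1}\sub G_2\sub\cl{G_2}\sub\cdots$ with $\bigcup_n G_n=\x$ and with the crucial property that every $B\in\bounded$ is contained in some $G_n$. The nesting $\cl{G_n}\sub G_{n+1}$ is obtained by repeatedly invoking properness (each bounded set, in particular each $\cl{G_n}$, sits inside an open bounded set), while the domination property is secured by arranging $C_n\sub G_n$, using that $\subbounded$ is a basis. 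Next, since a metrizable space is normal, Urysohn's lemma supplies continuous $g_n:\x\to[0,1]$ with $g_n\equiv 0$ on $\cl{G_n}$ and $g_n\equiv 1$ off $G_{n+1}$; I would set $g=\sum_{n\ge 1}g_n$. On each $G_m$ only finitely many summands are nonzero, so $g$ is a well-defined continuous function, bounded on every $G_m$; conversely, if $x\notin G_{m+1}$ then $g_1(x)=\cdots=g_m(x)=1$, so $g(x)\ge m$. Consequently $g$ is bounded on $B$ if and only if $B\sub G_m$ for some $m$, i.e. if and only if $B\in\bounded$.

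Finally I would define the metric. Taking any bounded metric $\rho$ generating the topology of $\x$, I put $d(x,y)=\rho(x,y)+|g(x)-g(y)|$. This is a metric, and since $g$ is continuous and $d\ge\rho$, the identity maps between $(\x,d)$ and $(\x,\rho)$ are continuous in both directions, so $d$ and $\rho$ generate the same topology. Because $\rho$ is bounded, a nonempty set $B$ is $d$-bounded precisely when $g$ has finite oscillation on $B$, equivalently (as $g\ge 0$) when $g$ is bounded on $B$, which by the previous paragraph happens exactly when $B\in\bounded$; thus the $d$-bounded Borel sets coincide with $\bounded$, as required. I expect the main obstacle to be the construction of the exhaustion $\{G_n\}$: one must simultaneously secure the topological nesting $\cl{G_n}\sub G_{n+1}$ and the order-theoretic domination ``every bounded set lies in some $G_n$'', and it is precisely here that both defining features of a properly localizing boundedness---properness and the existence of a countable cover by basis sets---enter. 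Once $g$ is in hand, checking that $d$ generates the topology and has the correct bounded sets is routine.
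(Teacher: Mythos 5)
Your proposal is correct, and it follows essentially the same route as the proof the paper points to: the paper itself gives no argument but cites Hu's Corollary 5.12, and its remark after Example 2.3 (the metric $d(x,y)=(d'(x,y)\wedge 1)\vee|1/d'(x,\mathbb{C})-1/d'(y,\mathbb{C})|$) is exactly the idea you implement in general, namely adjoining to a bounded compatible metric the oscillation of a continuous exhaustion function that is bounded precisely on the sets of $\bborel$. Your construction of the exhaustion $(G_n)$ and of $g$ via Urysohn's lemma correctly supplies the general version of that idea, so nothing is missing.
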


\begin{remark}
Note that {\cite[Corollary 5.12]{hu:1966}} concerns boundednesses which also contain non--Borel subsets of $\x$. We restrict to Borel subsets  since we work with Borel measures and it is easily seen that in this setting the conclusion of  {\cite[Corollary 5.12]{hu:1966}} still holds.
\end{remark}

Observe, if $\bounded$ properly localizes $\x$ then one can find a basis $(K_m)_{m\in N}$ of $\bounded$ which consists of open sets  and satisfies
\begin{equation}\label{eq:proper_boundedness}
	\cl{K}_m\subseteq K_{m+1} \,, \quad \mbox{	for all $m\in \Nset$\,.}
	\end{equation}
	Indeed, fix a metric which generates $\bounded$ and take $(K_m)$ to be open balls around some fixed point and with radius strictly increasing to infinity; see also \cite[Lemma 5.9]{hu:1966} for a direct argument which does not rely on Theorem \ref{thm:Hu}.
	 Conversely, if $(K_m)$ is a sequence of open sets covering $\x$ and such that   (\ref{eq:proper_boundedness}) holds, then the boundedness $\bounded=\{B\in \borel : \exists m\in \N \text{ such that } B\subseteq K_m\}$  properly localizes $\x$. We refer to $(K_m)$ as a \textit{proper localizing sequence}.	 
	 
Observe, by simply choosing a different family of bounded sets in $\x$ one alters the space of locally finite measures $\mx$ as well as the corresponding notion of vague convergence defined in (\ref{eq:vague_convergence}). We mention a couple of important examples.
%
	\begin{example}[{Weak convergence}]\label{exa:weak}
By taking $\bounded$ to be the family of all Borel subsets of $\spaceX$, we end up with the usual notion of weak convergence of finite measures on $\spaceX$.
Such a family $\bounded$ properly localizes $\spaceX$ since taking $K_m=\spaceX$ for all $m\in \Nset$ yields a proper localizing sequence. 

Observe that it is easy to find a metric which metrizes $\x$ and generates $\bounded$ in this case. Simply choose any bounded metric which generates the topology of $\spaceX$.
\end{example}

\begin{example}[{Vague convergence of Radon measures}]
When the space $\spaceX$ is additionaly locally compact, by choosing $\bounded$ as the family of all relatively compact Borel subsets of $\spaceX$ we obtain the well known notion of vague convergence of Radon measures on $\spaceX$ as described in \cite{kallenberg:1983} or~\cite{resnick:1987}. Recall, a set is 
relatively compact if its closure is compact. Note that in this case, since $\spaceX$ is locally compact, second countable and Hausdorff, one can find a sequence $(K_m)_{m\in \Nset}$ of relatively compact open subsets of $\spaceX$ which cover $\spaceX$ and satisfy \eqref{eq:proper_boundedness}. In particular, these $K_m$'s form a basis for $\bounded$ and hence $\bounded$ properly localizes $\spaceX$.
\end{example}

\begin{example}[{Hult-Lindskog convergence}]\label{exa:M0Convergence} Let $(\spaceX',d')$ be a complete and separable metric space. In the theory of regularly varying random variables and processes, the sets of interest, i.e.\ bounded sets, are usually those which are actually bounded away from some fixed closed set  $\mathbb{C}\subseteq \mathbb{X}'$.
 
 More precisely, 
 assume that $\spaceX$ is of the form $\spaceX=\spaceX'\setminus \mathbb{C}$ equipped with the subspace topology and
 set $\bounded$ to be the class of all Borel sets $B\subseteq \spaceX$ such that for some $\epsilon>0$, $d'(x,\mathbb{C})>\epsilon$ for all $x\in B$, where   $d'(x,\mathbb{C})=\inf\{d'(x,z):z\in \mathbb{C} \}$. In this way, we obtain the notion of the so--called $\mathbb{M}_{\mathbb{O}}$-convergence (where $\mathbb{O}=\spaceX$) as discussed in~\cite{lindskog:resnick:roy:2014} and originally introduced by~\cite{hult:lindskog:2006}. Observe, such $\bounded$ properly localizes $\x$ since one can take $K_m=\{x\in \spaceX: d'(x,\mathbb{C}) >1/m\}$, $m\in \N$, as a proper localizing sequence.
\end{example}

\begin{remark}
As observed by \cite[p.\ 125]{kallenberg:2017}, under the notation of the previous example, one metric $d$ which is topologically equivalent to $d'$ and generates $\bounded$  is given by
\[
d(x,y)=\left(d'(x,y)\wedge 1\right) \vee
\left|1/d'(x,\mathbb{C}) - 1/d'(y,\mathbb{C})\right| \; , \; x,y\in \mathbb{X} \, .
\]
In fact, this construction illustrates the basic idea in the proof of 
Theorem \ref{thm:Hu}, see
 \cite[Theorem 5.11]{hu:1966}.
\end{remark}

\begin{remark}
By the proof of \cite[Theorem 5.11]{hu:1966}, if $\x$ is completely metrizable and $\bounded$ properly localizes $\x$, one can assume that the metric which generates $\bounded$ in Theorem~\ref{thm:Hu} is also complete.
\end{remark}

In the rest of the paper we will always assume that the space $\spaceX$ is properly localized by a  family of bounded sets $\bounded$.
In such a case, Theorem \ref{thm:Hu} 
ensures that one can directly translate results in \cite{kallenberg:2017} to the vague convergence of locally finite measures on $\x$. In particular, by the so--called Portmanteau theorem (see \cite[Lemma 4.1]{kallenberg:2017}), $\mu_n\vto \mu$ in $\mx$ is equivalent to convergence
\begin{align}\label{eq:vagueConv_sets}
\mu_n(B)\to \mu(B)
\end{align} 
for all $B\in \bborel$ with $\mu(\partial B)=0$, where $\partial B$ denotes the boundary of the set $B$.  

\subsection{Vague convergence of point measures}	
Denote by
$\delta_{x}$ the Dirac measure concentrated at $x\in\mathbb{X}$.	 A \textit{(locally finite) point measure} on $\x$ is a measure $\mu\in \mx$ which is of the form $\mu=\sum_{i=1}^K \delta_{x_i}$ for some $K\in \{0,1,\dots\}\cup\{\infty\}$ and (not necessarily distinct) points $x_1,x_2,\dots,x_K$ in $\spaceX$. Note that by definition at most finitely many  $x_i$'s fall into every bounded set $B\in \bborel$. Denote by $\mpx$ the space of all point measures on $\x$.  Equivalently, one can define   point measures as  integer--valued measures in $\mx$, see e.g.~\cite[Exercise 3.4.2]{resnick:1987}.

The following result, which is a simple consequence of (\ref{eq:vagueConv_sets}), characterizes vague convergence in the case of point measures. 
It is fundamental when applying continuous mapping arguments to results on convergence in distribution of point processes (i.e.\ random point measures), see e.g.\ the proof of \cite[Theorem 7.1]{resnick:2007}. 
The proof of necessity can be found in \cite[Proposition 3.13]{resnick:1987}, and sufficiency is proved similarly, we omit the details; cf. also \cite[Lemma 2.1]{dombry:eyi-minko:2016}.

\begin{proposition}\label{prop:pointMeasures}
Let $\mu,\mu_1,\mu_2,\ldots  \in \mpx$ be  point measures. Then  $\mu_n \vto \mu$ implies that for every $B\in \bborel$ such that $\mu(\partial B)=0$ there exist $k,n_0\in \Nset$ and points $x_i^{(n)},x_i$, $n\geq n_0, i=1,\dots,k$, in $B$ such that for all $n\geq n_0$,
\begin{align*}
\mu_n\vert_{B} = \sum_{i=1}^k \delta_{x_i^{(n)}} \; \; \text{and} \; \; \mu\vert_{B} = \sum_{i=1}^k \delta_{x_i} \: ,
\end{align*}
and for all $i=1,\dots,k$, 
\begin{align*}
x_i^{(n)}\to x_i \; \text{ in } \; \x \, ,
\end{align*}
where $\mu_n\vert_{B}$ and $\mu\vert_{B}$ denote restriction of measures $\mu_n$ and $\mu$, respectively, to the set $B$. 
Conversely, to show that $\mu_n\vto \mu$, it is sufficient to check convergence of points in sets $B$ from  any basis $\mathcal{C}_b$ of $\bborel$.
\end{proposition}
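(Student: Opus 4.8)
The plan is to prove the two directions of Proposition~\ref{prop:pointMeasures} separately, both building on the set-convergence criterion~\eqref{eq:vagueConv_sets}.

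For the necessity direction, suppose $\mu_n \vto \mu$ and fix a bounded Borel set $B$ with $\mu(\partial B)=0$. First I would record that since $\mu \in \mpx$ is a point measure and $B$ is bounded, $\mu(B) = k < \infty$ for some $k \in \{0,1,2,\dots\}$, so we may write $\mu|_B = \sum_{i=1}^k \delta_{x_i}$ with all $x_i \in B$. The key step is to upgrade the single mass-convergence $\mu_n(B) \to \mu(B)$ from~\eqref{eq:vagueConv_sets} to a statement that localizes each atom. The standard device is to enclose each $x_i$ in a small open neighbourhood $G_i$ that is a continuity set (i.e.\ $\mu(\partial G_i)=0$) and whose closure is bounded and carries exactly the right multiplicity of $\mu$; one can do this by choosing small balls of radius $r$ and using that the function $r \mapsto \mu(\overline{B(x_i,r)})$ has at most countably many discontinuities, so $\mu$-null-boundary balls are available at arbitrarily small radii. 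Applying~\eqref{eq:vagueConv_sets} to each such neighbourhood and to $B$ itself, together with $\mu_n(B) \to k$, forces $\mu_n$ to place, for all large $n$, exactly the correct number of atoms in each $G_i$ and none in the leftover region $B \setminus \bigcup_i G_i$. Matching atoms to the $x_i$ and letting the radii shrink along a subsequence then yields points $x_i^{(n)} \to x_i$; I would organize this so that the total count $k$ and a threshold $n_0$ emerge uniformly, giving the claimed finite representation $\mu_n|_B = \sum_{i=1}^k \delta_{x_i^{(n)}}$ for $n \ge n_0$.

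For the sufficiency direction, I would argue that to establish $\mu_n \vto \mu$ it suffices, by~\eqref{eq:vagueConv_sets}, to show $\mu_n(B) \to \mu(B)$ for every $\mu$-continuity set $B$ in the chosen basis $\mathcal{C}_b$ of $\bborel$. Given the atomic convergence $x_i^{(n)} \to x_i$ on such a $B$, the convergence of masses $\mu_n(B) \to \mu(B)$ is immediate for the integer-valued quantities involved, since the finite representations coincide in cardinality. The only subtlety is passing from convergence on basis sets to convergence on all bounded continuity sets, but since every bounded $B$ is contained in some basis element and the measures are boundedly finite, the convergence on the basis propagates to the required vague convergence via~\eqref{eq:vagueConv_sets}.

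The main obstacle I expect is the localization bookkeeping in the necessity direction: one must simultaneously guarantee continuity sets around each atom, control the ``no mass escapes'' condition on $B \setminus \bigcup_i G_i$, and extract a single subsequence (or diagonal argument) along which all $k$ atoms converge, while keeping the count $k$ and the index $n_0$ consistent. Handling the case where several $x_i$ coincide (so the neighbourhoods must capture the correct total multiplicity) is the delicate point. This is precisely why the authors defer to \cite[Proposition 3.13]{resnick:1987} rather than reproduce the argument; the sufficiency half is genuinely routine by comparison.
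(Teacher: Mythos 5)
Your necessity argument follows the same route as the proof the paper points to (\cite[Proposition 3.13]{resnick:1987}): surround each distinct atom of $\mu|_B$ by a small $\mu$-continuity ball, apply \eqref{eq:vagueConv_sets} to these balls and to $B$ itself to pin down the multiplicities, and then shrink the radii. One correction there: no subsequence or diagonal argument is needed or wanted --- for each $\epsilon>0$ the construction produces a threshold $n_\epsilon$ beyond which every atom of $\mu_n|_B$ can be labelled to lie within $\epsilon$ of the corresponding atom of $\mu|_B$, which gives convergence along the full sequence; extracting a subsequence would prove strictly less than the statement asserts.

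The sufficiency half, however, has a genuine gap. You compress the hypothesis down to ``$\mu_n(B)\to\mu(B)$ for $B$ in the basis $\mathcal{C}_b$'' and then assert that this propagates to \eqref{eq:vagueConv_sets} for all bounded continuity sets because every bounded set is contained in a basis element. That implication is false: on $\x=\Rset$ with the usual bounded sets and basis $K_m=(-m,m)$, take $\mu_n=\delta_{(-1)^n}$ and $\mu=\delta_1$; then $\mu_n(K_m)=\mu(K_m)$ for every $m$ and $n$, yet $\mu_n\not\vto\mu$. Containment in a basis element controls the total mass inside it, not where that mass sits. The repair is to use the point convergence you are actually given rather than only the cardinality count it implies: for $f\in\cbbp$ with support contained in some $B\in\mathcal{C}_b$ one has $\mu_n(f)=\sum_{i=1}^k f(x_i^{(n)})\to\sum_{i=1}^k f(x_i)=\mu(f)$ by continuity of $f$, which is precisely the definition \eqref{eq:vague_convergence} of vague convergence; equivalently, for a bounded $B'$ with $\mu(\partial B')=0$ contained in such a $B$, each $x_i$ lies either in the interior of $B'$ or outside $\overline{B'}$, so $x_i^{(n)}$ eventually lands on the same side and $\mu_n(B')\to\mu(B')$. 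This is the ``proved similarly'' argument the paper alludes to when it defers the details.
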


\section{A comment on metrizability of the vague topology}\label{sec:vague_topology}
 
To study random measures in $\mx$ and $\mpx$, it is
 	useful to go beyond the concept of convergent sequences and 
 	specify a topology on those spaces as well. Moreover, to exploit the full power of probabilistic methods it is desirable that such a topology remains Polish.
The standard choice of topology on $\mx$ (see e.g.\ \cite[p.\ 111]{kallenberg:2017} or \cite[p.\ 140]{resnick:1987}) is the smallest topology under which the  maps $\mu \mapsto \mu(f)$ are continuous for all $f\in \cbbp$. Equivalently, this is the topology obtained by taking sets of the form 
\begin{equation}\label{eq:basis_vague}
\left\{\nu \in \mx : |\mu(f_i)- \nu (f_i)|< \epsilon \; \text{for all} \; i=1,\dots, k\right\}
\end{equation}
for $\epsilon>0, k\in \Nset$ and  $f_1,\dots,f_k \in \cbbp$, to be the neighborhood base of $\mu \in \mx$. We call this topology the \textit{vague} topology. 
Note that, by definition, $\mu_n \longrightarrow \mu$ with respect to the vague topology if and only if $\mu_n \vto \mu$. 

It is shown in  \cite[Theorem 4.2]{kallenberg:2017} that there exists a metric $\rho$ on $\mx$ with the property that $\mu_n\vto\mu$ in $\mx$ if and only if $\rho(\mu_n,\mu)\to 0$. Moreover, the metric space $(\mx,\rho)$ is complete and  separable.

It is now tempting to  conclude at once that the topology generated by the metric $\rho$ coincides with the vague topology and consequently that the vague topology is Polish.  However, there exist different topologies with the same convergent sequences, hence one can not identify those two topologies without knowing a priori that the vague topology, i.e.\ the topology generated by the sets in (\ref{eq:basis_vague}), is \textit{sequential}, i.e.\ completely determined by its converging sequences, see \cite{franklin:1965} (cf.~also~\cite{dudley:1964}). Note that  any first countable and hence any metrizable space is sequential. Since we have not been able to find  
 an appropriate argument anywhere  in the literature, we provide one here omitting some technical details.

\begin{proposition}\label{prop:metrization}
The space $\mx$ equipped with the vague topology is metrizable and hence Polish.
\end{proposition}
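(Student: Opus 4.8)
The plan is to realise the vague topology as the initial (weak) topology induced by a \emph{countable} family of maps into metrizable spaces, so that $\mx$ embeds homeomorphically into a countable product of separable metric spaces. By definition the vague topology is the initial topology on $\mx$ induced by the evaluation maps $I_f\colon \mu\mapsto \mu(f)$, $f\in\cbbp$, the sets in \eqref{eq:basis_vague} being exactly the corresponding basic neighbourhoods. By linearity (writing $g=g^+-g^-$ with $g^\pm\in\cbbp$) the map $\mu\mapsto\mu(g)$ is then vaguely continuous for every $g\in\cbb$. The obstacle is that $\cbbp$ is uncountable, so I would replace it by countably many maps generating the same topology.

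First I would fix a proper localizing sequence $(K_m)$ as in \eqref{eq:proper_boundedness} and, using the metric from Theorem~\ref{thm:Hu}, choose cut-off functions $\chi_m\in\cbbp$ with $\1{\cl{K}_m}\le \chi_m\le \1{K_{m+1}}$. Let $\mathcal{M}_f(\spaceX)$ denote the finite Borel measures on $\spaceX$ with the usual weak topology, which is separable and metrizable since $\spaceX$ is, and define $R_m\colon\mx\to\mathcal{M}_f(\spaceX)$ by $R_m(\mu)=\chi_m\cdot\mu$, i.e.\ $R_m(\mu)(A)=\int_A\chi_m\,d\mu$; this is a finite measure since $\chi_m$ is supported in the bounded set $K_{m+1}$. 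Each $R_m$ is vaguely continuous: for $h\in C_b(\spaceX)$ one has $R_m(\mu)(h)=\mu(\chi_m h)$ with $\chi_m h\in\cbb$, and $\mu\mapsto\mu(\chi_m h)$ is vaguely continuous by the previous paragraph, while the weak topology on $\mathcal{M}_f(\spaceX)$ is generated by the maps $\nu\mapsto\nu(h)$.

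The heart of the matter is to check that the countable family $\{R_m\}_{m\in\N}$ already generates the vague topology. One inclusion is the continuity just noted. For the converse it suffices to show that each $I_f$, $f\in\cbbp$, is continuous for the initial topology of the $R_m$; this is immediate because $f$ has bounded support, so there is $m$ with $\mathrm{supp}\,f\subseteq \cl{K}_m$, whence $\chi_m\equiv 1$ on $\mathrm{supp}\,f$ and $I_f(\mu)=\mu(f)=R_m(\mu)(f)$ factors through the weakly continuous evaluation $\nu\mapsto\nu(f)$ on $\mathcal{M}_f(\spaceX)$. Thus the vague topology coincides with the initial topology of $\Phi=(R_m)_m\colon\mx\to\prod_{m}\mathcal{M}_f(\spaceX)$. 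The one genuinely technical point, which I expect to be the main obstacle, is verifying that $\Phi$ is injective, equivalently that $\cbbp$ separates the points of $\mx$: if $\chi_m\mu=\chi_m\nu$ for all $m$ then $\mu=\nu$ by monotone convergence, using that finite measures on a Polish space are determined by their integrals against $C_b(\spaceX)$. Granting this, $\Phi$ is a homeomorphism onto its image, a subspace of the separable metrizable countable product $\prod_m\mathcal{M}_f(\spaceX)$, and hence $(\mx,\text{vague})$ is metrizable.

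Finally, to upgrade metrizability to Polishness I would invoke the complete separable metric $\rho$ of \cite[Theorem~4.2]{kallenberg:2017}. Since convergence in the vague topology is by definition the relation $\mu_n\vto\mu$, the vague topology and the $\rho$-topology have exactly the same convergent sequences, and two metrizable topologies with the same convergent sequences coincide, because in a metric space the closed sets are precisely the sequentially closed ones. Therefore the vague topology is generated by $\rho$ and is Polish.
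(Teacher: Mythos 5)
Your proposal is correct and follows essentially the same route as the paper: both reduce the problem to the weak topology on finite measures via the countable family of cut-off maps $\mu\mapsto \chi_m\cdot\mu$ (the paper's $T_m(\mu)(f)=\mu(f\cdot g_m)$), check that this family is separating and generates the vague topology, and then identify the resulting metrizable topology with Kallenberg's Polish metric via sequentiality; the paper merely writes your product embedding as an explicit metric $\tilde\rho(\mu,\nu)=\sum_m 2^{-m}\bigl(1\wedge\hat\rho(T_m(\mu),T_m(\nu))\bigr)$. The one step you take for granted that the paper treats with some care is the metrizability and separability of the weak topology on the space of all \emph{finite} (not just probability) measures, for which the paper builds an explicit metric $\hat\rho$ from the Prohorov metric.
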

\begin{proof}[Sketch of the proof]
Consider the space $\mathcal{\hat{M}}(\spaceX)$ of all finite Borel measures on $\spaceX$, i.e.\ Borel measures $\mu$ on $\spaceX$ such that $\mu(\spaceX)<\infty$. Equip $\mathcal{\hat{M}}(\spaceX)$ with the smallest topology under which the maps $\mu\mapsto \mu(f)$ are continuous for all nonnegative and bounded continuous functions $f$ on $\spaceX$. This topology is usually called  the \textit{weak} topology. Also, let $\mathcal{\hat{M}}_1(\spaceX)\subseteq \mathcal{\hat{M}}(\spaceX)$ be the subset of all probability measures equipped with the relative topology. 

 By \cite[Appendix III, Theorem 5]{billingsley:1968}, there exists a metric $\hat{\rho}_1$ on $\mathcal{\hat{M}}_1(\spaceX)$ (the so--called Prohorov metric) which generates the weak topology and moreover  $\hat{\rho}_1$ is bounded by $1$. Using this result, it is possible, but cumbersome, to show that the function $\rho$ on $\mathcal{\hat{M}}(\spaceX)\times \mathcal{\hat{M}}(\spaceX)$ given by
$$\hat{\rho}(\mu,\nu)=|\mu(\spaceX) - \nu(\spaceX)| + \left(\mu(\spaceX) \wedge \nu(\spaceX)\right) \cdot \hat{\rho}_1(\mu(\,\cdot\,)/\mu(\spaceX),\nu(\,\cdot\,)/\nu(\spaceX) )$$
is a proper metric which generates the weak topology on $\mathcal{\hat{M}}(\spaceX)$. 

Further, take a proper localizing sequence $(K_m)_{m\in \N}$. In particular (\ref{eq:proper_boundedness}) holds, and since $\x$ is a metric space, for every $m\in \N$ one can find a continuous function $g_m$ on $\x$ such that $\1{\overline{K}_m}\leq g_m \leq \1{K_{m+1}}$. Clearly, $(g_m)\subseteq \cbbp$. 
For every $m\in \N$ define a mapping $T_m: \mx \to\mathcal{\hat{M}}(\spaceX) $  such that 
$T_m(\mu)$ is the (unique) measure satisfying $T_m(\mu)(f)= \mu(f\cdot g_m)$ for all nonnegative and bounded functions $f$ on $\x$. Note that, if $\mu \neq \nu$ for $\mu,\nu \in \mx$ then necessarily $T_m(\mu)\neq T_m(\nu)$ for some $m\in \N$. Using the fact that $\hat{\rho}$ generates the weak topology on $\mathcal{\hat{M}}(\spaceX)$, one can show that the metric
\begin{align*}
\tilde \rho (\mu,\nu) = \sum_{m=1}^\infty  \frac{ 1 \wedge \hat \rho (T_m(\mu),T_m(\nu))}{2^m} \, , \; \mu,\nu \in \mx
\end{align*}
generates the vague topology on $\mx$.

\end{proof} 

	\section{Sufficient conditions for convergence in distribution of random measures}\label{sec:conv_in_distr}
A \textit{random measure} on $\x$ is a random element in $\mx$ w.r.t.\ the smallest  $\sigma$--algebra under which the maps 
$\mu \mapsto \mu(B)$ are measurable for all $B\in\bborel$.  By \cite[Lemma 4.7]{kallenberg:2017}, this $\sigma$--algebra equals the Borel $\sigma$--algebra on $\mx$ arising from the vague topology. 
Convergence in distribution in $\mx$ 
is considered w.r.t.\ the vague topology and is denoted by "$\dto$".

For random measures $N,N_1,N_2,\dots$ on $\spaceX$, it is fundamental that $N_n\dto N$ in $\mx$ if and only if $N_n(f)\dto N(f)$ in $\Rset$ for all $f\in \cbbp$. This is further equivalent to convergence of Laplace functionals $\ex[e^{-N_n(f)}]\to \ex[e^{-N(f)}]$ for all $f\in \cbbp$, see  \cite[Theorem 4.11]{kallenberg:2017}.
We first show that in the last two convergences it is sufficient to consider only functions which are Lipschitz continuous with respect to any suitable metric.

\subsection{Lipschitz functions determine convergence in distribution}

For any metric $d$ on $\spaceX$ denote by $LB^{+}_b (\spaceX,d)$ the family of all bounded nonnegative functions $f$ on $\spaceX$ which have bounded support and are Lipschitz continuous with respect to $d$.
Furthermore, for a set $B\subseteq \spaceX$ and $\epsilon>0$ denote 
\[
B^{\epsilon} = B^{\epsilon}(d)=\{x\in \spaceX:\: d(x,B)\leq \epsilon\} \; .
\]
\begin{proposition}\label{cor:Lipschitz}
Assume that $d$ is a metric on $\spaceX$ which generates the corresponding topology and such that for any $B\in \mathcal{B}_b$ there exists an $\epsilon>0$ such that $B^{\epsilon}\in\mathcal{B}_b$. Then $N_n\dto N$ in $\mx$ if and only if $
\ex[e^{-N_n(f)}]\to \ex[e^{-N(f)}]$ for every $f\in LB_b^+(\spaceX,d)
$.
\end{proposition}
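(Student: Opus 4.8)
Since $LB_b^+(\spaceX,d)\subseteq\cbbp$, the "only if" direction is immediate from the fundamental equivalence recalled above (\cite[Theorem 4.11]{kallenberg:2017}). For the converse, the plan is to bootstrap the assumed convergence of Laplace functionals from the Lipschitz class $LB_b^+(\spaceX,d)$ up to the full class $\cbbp$; once $\ex[e^{-N_n(f)}]\to\ex[e^{-N(f)}]$ is established for every $f\in\cbbp$, that same equivalence delivers $N_n\dto N$.

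To carry this out I would fix $f\in\cbbp$, set $B=\cl{\{f>0\}}$ (a bounded set) and $c=\sup_x f(x)$, and sandwich $f$ between two sequences of Lipschitz functions via the Moreau--Yosida regularisations
\[
f_k(x)=\inf_{y\in\spaceX}\bigl\{f(y)+k\,d(x,y)\bigr\},\qquad
g_k(x)=\sup_{y\in\spaceX}\bigl\{f(y)-k\,d(x,y)\bigr\}.
\]
Routine arguments show that each $f_k,g_k$ is nonnegative, bounded by $c$, and $k$--Lipschitz, with $f_k\leq f\leq g_k$, and $f_k\uparrow f$, $g_k\downarrow f$ pointwise as $k\toi$ (using continuity of $f$). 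The crucial point is the control of supports. As $f_k\leq f$, one has $\{f_k>0\}\subseteq B$, so $f_k$ always has bounded support. For $g_k$, the inequality $g_k(x)>0$ forces some $y\in B$ with $d(x,B)\le d(x,y)<c/k$, hence $\{g_k>0\}\subseteq B^{c/k}$; invoking the hypothesis to pick $\epsilon>0$ with $B^\epsilon\in\bborel$, for $k\ge c/\epsilon$ the closed set $\cl{\{g_k>0\}}\subseteq B^\epsilon$ lies in $\bborel$ by downward closedness of the boundedness. Thus $f_k\in LB_b^+(\spaceX,d)$ for all $k$, and $g_k\in LB_b^+(\spaceX,d)$ for $k\ge c/\epsilon$.

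With the sandwich in hand, $N_n(g_k)\geq N_n(f)\geq N_n(f_k)$ gives, for each fixed large $k$,
\[
\ex\bigl[e^{-N_n(g_k)}\bigr]\leq\ex\bigl[e^{-N_n(f)}\bigr]\leq\ex\bigl[e^{-N_n(f_k)}\bigr].
\]
I would then let $n\toi$ (applying the hypothesis to the Lipschitz functions $f_k$ and $g_k$) and afterwards $k\toi$; monotone and dominated convergence for the measure $N$ collapse both $\ex[e^{-N(g_k)}]$ and $\ex[e^{-N(f_k)}]$ to $\ex[e^{-N(f)}]$, squeezing $\liminf_n$ and $\limsup_n$ of $\ex[e^{-N_n(f)}]$ to the common value $\ex[e^{-N(f)}]$. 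The one genuinely delicate step, and the place where the metric hypothesis is indispensable, is producing the \emph{upper} approximants $g_k$ inside the class $LB_b^+(\spaceX,d)$: the sup--convolution unavoidably spreads the support outward, and only the assumption $B^\epsilon\in\bborel$ guarantees that this enlarged support stays bounded. The lower approximants $f_k$ are harmless, since they keep the support inside $B$.
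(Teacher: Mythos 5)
Your argument is correct, but it takes a genuinely different route from the paper's. The paper invokes the set version of Kallenberg's Theorem 4.11: it fixes bounded sets $B_1,\dots,B_k$ with $\pr(N(\partial B_i)=0)=1$ and establishes convergence of the joint Laplace functionals $\ex[e^{-\sum_i\lambda_i N_n(B_i)}]$ by sandwiching each indicator $\1{B_i}$ between the explicit Lipschitz approximants $f_{m,i}^-=m\,d(\cdot,(B_i^\circ)^c)\wedge 1\nearrow\1{B_i^\circ}$ and $f_{m,i}^+=1-(m\,d(\cdot,\overline B_i)\wedge 1)\searrow\1{\overline B_i}$; the a.s.\ null boundary is what closes that squeeze. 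You instead upgrade the Laplace-functional convergence from $LB_b^+(\spaceX,d)$ to all of $\cbbp$ directly, sandwiching an arbitrary $f\in\cbbp$ between its Moreau--Yosida inf- and sup-convolutions and then invoking the function version of the same theorem. Your route avoids continuity sets and boundary conditions altogether and is arguably more self-contained; the paper's route is the one that generalizes immediately to other approximating classes (as noted in the remark following its proof, where the Lipschitz family is replaced by any class admitting monotone approximants of indicators of sets from a dissecting semi-ring). Both arguments use the hypothesis $B^\epsilon\in\bborel$ in exactly the same place, namely to keep the support of the upper approximant bounded, and you have handled the one delicate analytic point correctly: the domination $g_k\leq c\,\1{B^\epsilon}$ together with $N(B^\epsilon)<\infty$ a.s.\ is what lets dominated convergence collapse $\ex[e^{-N(g_k)}]$ to $\ex[e^{-N(f)}]$, while the lower approximants only need monotone convergence.
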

\begin{remark}
Every metric on $\spaceX$ which generates the topology and the family of bounded sets (as in Theorem \ref{thm:Hu}) satisfies the assumptions of the previous proposition.
\end{remark}

\begin{example}\label{exa:M0}
Consider the case from Example \ref{exa:M0Convergence}. Recall, $(\spaceX',d')$ is assumed to be a complete and separable metric space and $\mathbb{C}\subseteq \spaceX'$ a closed subset of $\spaceX'$. The space $\spaceX=\spaceX' \setminus \mathbb{C}$ is equipped with the subspace topology (i.e.\ generated by $d'$) and with $B\subseteq \spaceX$ being bounded if and only  if $B$ is contained in $\{x\in\spaceX : d'(x,\mathbb{C})>1/m\}$ for some $m\in \Nset$.
In this case, the metric $d'$ generates the topology but the corresponding class of metrically bounded sets does not coincide with $\bounded$. Still, $d'$ obviously satisfies the assumptions of the previous proposition. 
\end{example}

\begin{proof}[Proof of Proposition \ref{cor:Lipschitz}] We only need to prove sufficiency.
Take arbitrary $k \in \Nset$, $\lambda_1,\dots,\lambda_k\geq 0$ and 
$B_1,\dots,B_k \in \bborel$ such that $\pr(N(\partial B)=0)=1$ for all $i=1,\dots,k$. By \cite[Theorem 4.11]{kallenberg:2017},
  the result will follow if we show that 
\begin{equation}\label{eq:Laplace_convergence_simple_fn}
\lim_{n\to \infty} \ex[e^{-\sum_{i=1}^k\lambda_i N_n(B_i)}]= \ex[e^{-\sum_{i=1}^k\lambda_i N(B_i)}] \; .
\end{equation} 
For all $m\in\Nset, i=1,\dotsc k$ and $x\in \spaceX$ set
\begin{align}\label{eq:approx_functions}
f_{m,i}^+(x)=1- (md(x,\overline{B}_i)\wedge 1)\; ,\; f_{m,i}^-(x)=md(x,(B_i^\circ)^c)\wedge 1 \, ,
\end{align}
$B^\circ$ denotes the interior of $B$.
Using the elementary fact that for a closed set $C\subseteq \spaceX$ and $x\in \spaceX$, $x\in C$ if and only if $d(x,C)=0$, it is straightforward to show that for all $i=1,\dots, k$ as $m\to \infty,$
\begin{align}\label{eq:approx_with_Lip_fn}
f_{m,i}^+ \searrow \1{\overline{B}_i} \;  \text{and} \; f_{m,i}^- \nearrow \1{B_i^\circ} \, .
\end{align}

Functions $f_{m,i}^-$ obviously have bounded support for all $i$ and $m$, and since $f_{m,i}^+ \leq \1{\{x\in \spaceX \: : \: d(x,\overline{B}_i)\leq 1/m\}}$, by assumption on the metric $d$, $f_{m,i}^+$ has bounded support for $m$ large enough. Assume without loss of generality that this is true for all $m\in \Nset$. 
Further, it is not difficult to show that for all $i, m$ and all $x,y\in \spaceX$
\[
|f_{m,i}^+(x)-f_{m,i}^+(y)| \vee |f_{m,i}^-(x)-f_{m,i}^-(y)| \leq m d(x,y) \;.
\]
Hence, $f_{m,i}^+$ and $f_{m,i}^-$ are elements of $LB_b^+(\spaceX,d)$.

By the monotone and the dominated convergence theorem,
\begin{equation*}
\lim_{m\to\infty}\ex[e^{-\sum_{i=1}^k\lambda_i N( f_{m,i}^-)}]= \ex[e^{-\sum_{i=1}^k\lambda_i N(B^{\circ}_i)}]=\ex[e^{-\sum_{i=1}^k\lambda_i N(B_i)}]\; , 
\end{equation*}
where the last equality follows since we assumed $N(\partial B_i)=0$ a.s. for all $i$. Since $\sum_{i=1}^k\lambda_i f_{m,i}^-$ is again in $LB_b^+(\spaceX,d)$, convergence in (\ref{eq:Laplace_convergence_simple_fn}) will follow if we prove that
\begin{equation}\label{eq:approximationNn(A)}
\lim_{m\to\infty}\limsup_{n\to\infty} \left|\ex[e^{-\sum_{i=1}^k\lambda_i N_n(B_i)}]-\ex[e^{-\sum_{i=1}^k\lambda_i N_n(f_{m,i}^-)}]\right|=0 \;. 
\end{equation}
Since for all $i$ and $m$, $f_{m,i}^-\leq \1{B_i}\leq f_{m,i}^+$,
\begin{align*}
0\leq \ex[e^{-\sum_{i=1}^k\lambda_i N_n(f_{m,i}^-)}]-\ex[e^{-\sum_{i=1}^k\lambda_i N_n(B_i)}]
\leq \ex[e^{-\sum_{i=1}^k\lambda_i N_n(f_{m,i}^-)}]-\ex[e^{-\sum_{i=1}^k\lambda_i N_n(f_{m,i}^+)}] \, .
\end{align*}
Notice that for for fixed $i$ and all $m\in \Nset $,  the support of $f_{m,i}^+$  is contained in the support of $f_{1,i}^+$, which we assumed is a bounded set. Since $N$ a.s.\ puts finite measure on such sets, applying the dominated convergence theorem twice yields that $\lim_{m\to\infty}\ex[e^{-\sum_{i=1}^k\lambda_i N( f_{m,i}^+)}]= \ex[e^{-\sum_{i=1}^k\lambda_i N(\overline{B}_i)}]$.
Therefore,
\begin{multline*}
\lim_{m\to\infty}\limsup_{n\to\infty} \left|\ex[e^{-\sum_{i=1}^k\lambda_i N_n(B_i)}]-\ex[e^{-\sum_{i=1}^k\lambda_i N_n(f_{m,i}^-)}]\right|
\\
\leq  \lim_{m\to\infty}\ex[e^{-\sum_{i=1}^k\lambda_i N(f_{m,i}^-)}]-\ex[e^{-\sum_{i=1}^k\lambda_i N(f_{m,i}^+)}]\\
=\ex[e^{-\sum_{i=1}^k\lambda_i N(B_i^\circ)}]-\ex[e^{-\sum_{i=1}^k\lambda_i N(\overline{B}_i)}]=0 \, ,
\end{multline*}
where the last equality holds since $N(\partial B_i)=0$ a.s. Hence, (\ref{eq:approximationNn(A)}) holds and this finishes the proof.
\end{proof}

\begin{remark}
One can easily verify that the previous result holds with the family of Lipschitz functions replaced by any family $\mathcal{C}\subseteq \cbbp$ such that (i) $\alpha f + \beta g \in \mathcal{C}$ for all $f,g\in\mathcal{C},\alpha,\beta\geq 0$, (ii) for all $B$ in a dissecting semi-ring $\mathcal{I}\subseteq \{B\in \bborel : \pr(N(\partial B)=0)=1 \}$ , there exist $(f_m^+)_{m\in\Nset},(f_m^-)_{m\in\Nset} \subseteq \mathcal{C}$ such that $f_m^+ \searrow \1{\overline{B}}$ and $f_m^- \nearrow \1{B^{\circ}}$; see \cite{kallenberg:2017} for the definition of a dissecting semi-ring. In \cite{kallenberg:2017}, a family $\mathcal{C}\subseteq \cbbp$ satisfying condition (ii) is called an approximating class of $\mathcal{I}$ and it was shown that such families determine vague convergence in $\mx$, see \cite[Lemma 4.1]{kallenberg:2017}. 
\end{remark}

\subsection{An application to multiplicatively closed subsets of $\cbbp$}

Recently, \cite{lohr:rippl2016} obtained a very general result which gives sufficient conditions for a family of functions to be convergence determining for vague convergence of measures in $\mx$, see \cite[Theorem 2.3]{lohr:rippl2016}. Their approach rests on the Stone-Weierstrass theorem. Using Proposition \ref{cor:Lipschitz} we are able to extend the results from \cite{lohr:rippl2016} to the case of random measures.

\begin{proposition}\label{prop:Lohr_Rippl}
Let $\mathcal{C}\subseteq \cbbp$  be such that
\begin{enumerate}[(i)]
\item $\alpha f + \beta g \in \mathcal{C}$ and $f\cdot g\in \mathcal{C}$ for all $f,g\in\mathcal{C},\alpha,\beta\geq 0$; 
\item the family $\mathcal{C}$ induces the topology of $\spaceX$, i.e.\ sets $f^{-1}(U)$ for $U\subseteq \Rset_+$ open and $f\in \mathcal{C}$, form a subbase for the original topology on $\spaceX$;
\item \label{item:hm's} for every $m \in \Nset$ there exists $\delta_m>0$ and $h_m\in \mathcal{C}$ such that $h_m\geq \delta_m \1{K_m}$. 
\end{enumerate}
Then
$N_n\dto N$ in $\mx$ if and only if $\ex[e^{-N_n(f)}]\to \ex[e^{-N(f)}]$ for every $f\in \mathcal{C}$.
\end{proposition}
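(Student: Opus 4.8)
The ``only if'' direction is immediate, since every $f\in\mathcal{C}$ belongs to $\cbbp$ and so convergence of the corresponding Laplace functionals follows from \cite[Theorem 4.11]{kallenberg:2017}. For sufficiency I would not try to approximate general test functions directly, but instead combine a tightness argument, fuelled by condition (iii), with the \emph{deterministic} convergence-determining property of $\mathcal{C}$ from \cite[Theorem 2.3]{lohr:rippl2016}: the plan is to show $(N_n)_n$ is tight in $\mx$, extract subsequential limits in distribution, and identify every such limit with $N$.

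First I would derive tightness from (iii). Fix $m\in\N$; by (i) we have $\lambda h_m\in\mathcal{C}$ for every $\lambda\geq 0$, so the hypothesis gives $\ex[e^{-\lambda N_n(h_m)}]\to\ex[e^{-\lambda N(h_m)}]$ for all $\lambda\geq 0$, i.e.\ $N_n(h_m)\dto N(h_m)$ in $\R$, whence $(N_n(h_m))_n$ is tight. Since $h_m\geq \delta_m\1{K_m}$, we get $N_n(K_m)\leq \delta_m^{-1}N_n(h_m)$, so $(N_n(K_m))_n$ is tight for each $m$. As $(K_m)$ is a proper localizing sequence, the tightness criterion for random measures (\cite{kallenberg:2017}) then yields that $(N_n)_n$ is tight in $\mx$, which is Polish by Proposition \ref{prop:metrization}.

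By Prohorov's theorem every subsequence of $(N_n)_n$ has a further subsequence $(N_{n_k})$ converging in distribution to some random measure $N'$. Since $\mu\mapsto e^{-\mu(f)}$ is bounded and continuous on $\mx$ for each $f\in\cbbp$, this gives $\ex[e^{-N'(f)}]=\lim_k\ex[e^{-N_{n_k}(f)}]=\ex[e^{-N(f)}]$ for every $f\in\mathcal{C}$. It then remains to show $N'\stackrel{d}{=}N$, after which the usual subsequence argument combined with tightness yields $N_n\dto N$. For the identification I would use (i): closure of $\mathcal{C}$ under nonnegative linear combinations upgrades equality of one-dimensional Laplace transforms to $\ex[e^{-\sum_{i=1}^k\lambda_i N(f_i)}]=\ex[e^{-\sum_{i=1}^k\lambda_i N'(f_i)}]$ for all $f_1,\dots,f_k\in\mathcal{C}$, $\lambda_1,\dots,\lambda_k\geq 0$, hence to equality in law of the vectors $(N(f_i))_{i\leq k}$ and $(N'(f_i))_{i\leq k}$. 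Thus the $\R^{\mathcal{C}}$-valued elements $(N(f))_{f\in\mathcal{C}}$ and $(N'(f))_{f\in\mathcal{C}}$ share all finite-dimensional distributions and so have the same law. Finally, \cite[Theorem 2.3]{lohr:rippl2016} makes $\mathcal{C}$ measure determining for deterministic measures, so the evaluation map $\iota\colon\mu\mapsto(\mu(f))_{f\in\mathcal{C}}$ is injective; passing to a countable subfamily of $\mathcal{C}$ (available since $\mathcal{C}$ induces the separable topology of $\spaceX$ by (ii), together with the countable collection $(h_m)$) and invoking the Lusin--Souslin theorem, $\iota$ becomes a Borel isomorphism onto its image between Polish spaces, whence equality of the laws of $\iota(N)$ and $\iota(N')$ forces $N'\stackrel{d}{=}N$.

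The main obstacle is exactly this identification step, namely converting the deterministic separation supplied by \cite{lohr:rippl2016} into a statement about laws of \emph{random} measures. The two delicate points are (a) checking that a countable subfamily of $\mathcal{C}$ is still measure determining, so that the image lives in a standard Borel space where Lusin--Souslin applies, and (b) the measurability bookkeeping that takes equality of finite-dimensional distributions of $(N(f))_{f\in\mathcal{C}}$ to equality of laws on $\mx$; here the Polishness of $\mx$ from Proposition \ref{prop:metrization} and the Stone--Weierstrass-based result of \cite{lohr:rippl2016} carry the load. I note that Proposition \ref{cor:Lipschitz}, or rather the approximating-class viewpoint of the remark following it, offers an alternative, subsequence-free route: if the functions in \cite{lohr:rippl2016} sandwiching indicators of a dissecting semi-ring of $N$-continuity sets can be taken within $\mathcal{C}$ (which is plausible as $\mathcal{C}$ is closed under both products and nonnegative combinations by (i)), then that remark applies verbatim and replaces the tightness-and-identification machinery above.
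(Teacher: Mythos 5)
Your argument is essentially correct, but it follows a genuinely different route from the paper. The paper's proof is a direct approximation argument: it builds a metric $d(x,y)=\sum_i 2^{-i}|f_i(x)-f_i(y)|$ from a countable subfamily of $\mathcal{C}$ inducing the topology, checks that $d$ satisfies the hypotheses of Proposition \ref{cor:Lipschitz}, and then, for each $g\in LB_b^+(\spaceX,d)$, uses the Stone--Weierstrass step of \cite{lohr:rippl2016} to find $f\in\operatorname{span}(\mathcal{C})$ with $|g-f|\leq\epsilon h_m$, bounding $|\ex[e^{-N_n(g)}]-\ex[e^{-N_n(f)}]|$ uniformly in $n$ via tightness of $(N_n(h_m))_n$ --- no subsequences, no Prohorov, no identification of limits. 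You instead use condition (iii) for tightness of $(N_n)_n$ in the Polish space $\mx$, extract subsequential limits, and identify them with $N$ through equality of finite-dimensional Laplace transforms plus the measure-determining property of $\mathcal{C}$; here Stone--Weierstrass enters only through injectivity of the evaluation map rather than through quantitative approximation. Your tightness and subsequence steps are sound (the continuity theorem for Laplace transforms gives $N_n(h_m)\dto N(h_m)$, and $N_n(B)\leq\delta_m^{-1}N_n(h_m)$ for $B\subseteq K_m$ feeds Kallenberg's tightness criterion). The one place that genuinely needs more than a wave is the point you label (a): \cite[Theorem 2.3]{lohr:rippl2016} makes the \emph{full} family $\mathcal{C}$ separating, and to invoke Lusin--Souslin you must exhibit a \emph{countable} separating subfamily; this can be done by closing $(f_i)_i\cup(h_m)_m$ under finite products and nonnegative rational linear combinations (still inside $\mathcal{C}$ by (i)) and rerunning the Stone--Weierstrass density argument for that countable algebra, but it is not a citation-only step. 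Your closing suggestion that the approximating-class remark after Proposition \ref{cor:Lipschitz} might apply directly is more doubtful: conditions (i)--(iii) give a lower bound $h_m\geq\delta_m\1{K_m}$ but no evident monotone sandwich $f_m^-\nearrow\1{B^\circ}$, $f_m^+\searrow\1{\overline B}$ within $\mathcal{C}$, so that shortcut should not be relied upon. The trade-off: the paper's route is more self-contained and quantitative; yours is more conceptual and avoids constructing the auxiliary metric, at the cost of descriptive-set-theoretic machinery and the countable-subfamily verification.
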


\begin{proof}
Again, we only need to prove sufficiency. Since $\spaceX$ is separable one can find a countable subfamily $(f_i)_{i\in \Nset} \subseteq \mathcal{C}$ of functions with values in $[0,1]$  which induce the topology of $\spaceX$, see \cite[Theorem 2.3, Step 2]{lohr:rippl2016}.  Assume that the functions $h_m$ from \ref{item:hm's} are bounded by $1$ and that $(h_m)_m\subseteq (f_i)_i$. Define the metric $d$ on $\spaceX$ by $d(x,y)=\sum_{i\in \Nset}2^{-i} |f_i(x)-f_i(y)|$ for all $x,y\in \spaceX$. In particular, the topology generated by the metric $d$ coincides with the original topology on $\spaceX$. We now show that metric $d$ satisfies the remaining assumption of Proposition \ref{cor:Lipschitz}.

Take an arbitrary $B\in \bborel$ and find $m\in \Nset$ such that $B\subseteq K_m$. Since the function $h_m$ has bounded support, there exist an $m'\geq m$ such that $y\notin K_{m'}$ implies that $h_m(y)=0$. Hence, for $i_{m}\in \Nset$ such that $f_{i_m}=h_m$ we have that $y\notin K_{m'}$  implies that $d(x,y)\geq 2^{-i_m}h_m(x) \geq 2^{-i_m} \delta_m=: 2\epsilon$ for all $x\in B$. Hence, $B^\epsilon \subseteq K_{m'}$ and therefore $B^\epsilon\in \bborel$.   

Denote now by $\text{span}(\mathcal{C})$ the vector subspace of $\cbb$ spanned by $\mathcal{C}$. Since for fixed $f,g\in\mathcal{C}$, $\ex[e^{-\alpha N_n(f) - \beta N_n(g)}]\to \ex[e^{-\alpha N(f) - \beta N(g)}]$ for all $\alpha,\beta\geq 0$, by the the continuity theorem for Laplace transforms (see \cite[15.5.2]{kallenberg:1983}), random variables $N_n(f)$ and $N_n(g)$ converge jointly in distribution to $N(f)$ and $N(g)$. In particular, $N_n(f)\dto N(f)$ for all
$f\in \text{span}(\mathcal{C})$. 

Fix an arbitrary $g\in LB^{+}_b (\spaceX,d)$ and let $m$ be such that $x\notin K_m$ implies $g(x)=0$. In particular, by assumption \ref{item:hm's}, $h_m(x)=0$ implies $g(x)=0$. Now by an application of the Stone-Weierstrass theorem (see Lemma 2.6 and the proof of Theorem 2.3 in \cite{lohr:rippl2016}) it follows that for all $\epsilon>0$ there exists an $f\in \text{span}(\mathcal{C})$ such that
\begin{align}\label{eq:Mointer-1}
|g(x)-f(x)|\leq \epsilon h_{m}(x) \,  \: \text{for all} \; x\in \spaceX \, .
\end{align}
Using this we will show that $\big|\ex[e^{-N_n(g)}] - \ex[e^{-N(g)}]\big| \to 0$ 
 and since $g\in LB^{+}_b (\spaceX,d)$ was arbitrary, Proposition \ref{cor:Lipschitz} will imply that $N_n\dto N$ in $\mx$.


Take $\epsilon>0$ and let $f\in \text{span}(\mathcal{C})$ be such that (\ref{eq:Mointer-1}) holds. Since $\ex[e^{-N_n(f)}]\to \ex[e^{-N(f)}]$,
\begin{align}\label{eq:Mointer0}
\limsup_{n\toi} \big|\ex[e^{-N_n(g)}] - \ex[e^{-N(g)}] \big| \leq  \limsup_{n\toi} \big|\ex[e^{-N_n(g)}] - \ex[e^{-N_n(f)}] \big|+ \big|\ex[e^{-N(f)}] - \ex[e^{-N(g)}] \big|  \, .
\end{align}
Using (\ref{eq:Mointer-1}) and the simple bound $|e^{-x} - e^{-y}|\leq |x-y|\wedge 1$, $x,y\geq 0$, we obtain that for any random measure $M$ in $\mx$
\begin{align*}
\big|\ex[e^{-M(g)}] - \ex[e^{-M(f)}] \big| \leq \ex[M(|g-f|)\wedge 1] \leq \pr(M(h_m)> C) + \epsilon C \, , 
\end{align*}
for all $C>0$. Using this bound for $N_n$ and $N$, (\ref{eq:Mointer0}) yields 
that for all $C,\epsilon>0$
\begin{align}\label{eq:inter1}
\limsup_{n\toi} \big|\ex[e^{-N_n(g)}] - \ex[e^{-N(g)}] \big| \leq \limsup_{n\toi}  \pr(N_n(h_m)> C) + \pr(N(h_m)> C) + 2\epsilon C \, .
\end{align}

Since $h_m \in \mathcal{C}$, it holds that $N_n(h_m)\dto N(h_m)$ which implies that the random variables $(N_n(h_m))_n$ are tight.  Thus, letting $\epsilon\to 0$ and then $C\toi$ in (\ref{eq:inter1}) yields that $\big|\ex[e^{-N_n(g)}] - \ex[e^{-N(g)}]\big| \to 0$ as $n\toi$.


%
%
\end{proof}

\begin{remark}
\cite[Theorem 2.3]{lohr:rippl2016} allow the  family $\mathcal{C}$ to include functions with unbounded support and deduce sufficient conditions for vague convergence of measures in $\mx$ which integrate functions from $\mathcal{C}$. Based on personal communication with Wolfgang L\"ohr, we note that such an extension of Proposition \ref{prop:Lohr_Rippl} is possible but omit the details.

\end{remark}

\section*{Acknowledgements}
The authors would like to thank Adam Jakubowski for fruitful discussions during his visit to Zagreb and Philippe Soulier for careful reading of the manuscript and several helpful comments. Finally, the authors are grateful to Wolfgang L\"ohr for a very useful discussion which led to addition of Proposition \ref{prop:Lohr_Rippl} to our paper.
 The research of both authors is supported in part by the HRZZ project "Stochastic methods in analytical and applied problems" (3526) and the SNSF/HRZZ Grant CSRP 2018-01-180549. 

\section*{}
  \bibliographystyle{elsarticle-harv} 
  \bibliography{bib_note_on_vague}


%
%
%

\end{document}